\documentclass[elsarticle]{article}
\usepackage{latexsym}
\usepackage{amsfonts}
\usepackage{amssymb}
\usepackage{amscd}
\usepackage{amsmath}
\usepackage{amsthm}
\usepackage{url}
\usepackage[hmargin=1.1in,vmargin=1.1in]{geometry}
\usepackage{graphicx}
\usepackage{mathtools}

\usepackage[T1]{fontenc}
\usepackage[latin1]{inputenc}
\usepackage[compact]{titlesec}
\usepackage{sectsty}
\allsectionsfont{\scshape\selectfont}

\theoremstyle{plain}
\newtheorem{thm}{Theorem}[section]
\theoremstyle{definition}		
\newtheorem{definition}[thm]{Definition}

\newcommand{\cF}{\mathcal{F}}
\newcommand{\cL}{\mathcal{L}}

\newcommand{\CC}{\mathbb{C}}
\newcommand{\RR}{\mathbb{R}}
\newcommand{\EE}{\mathbb{E}}
\newcommand{\p}{\rm {I\kern-1pt P}}

 \newtheorem{lemma}{Lemma}[section]
\newtheorem{proposition}{Proposition}[section]
\newtheorem{theorem}{Theorem}[section]
\newcommand{\MT}{\left[ \begin{array}{ccccccccc}}
\newcommand{\EM}{\end{array}\right]}

\newtheorem{assumption}{Assumption}[section]

\newcommand{\LM}{\left[\begin{array}{ccccccccc}}
\newcommand{\RM}{\end{array}\right]}
\newcommand{\LA}{\left\{ \begin{array}{ccccccccc}}
\newcommand{\RA}{\end{array}\right.}
\newcommand{\RAA}{\end{array}\right\}}



\newcommand{\lin}{\text{lin}}
\newcommand{\Sx}{S_{\mathbf{x}}}
\newcommand{\Sxs}{S_{\mathbf{x}}^{\ast}}

\newcommand{\bPsi}{\boldsymbol{\Psi}}
\newcommand{\bPhi}{\boldsymbol{\Phi}}

\newcommand{\bx}{\mathbf{x}}

\newcommand{\cH}{\mathcal{H}}
\newcommand{\cR}{\mathcal{R}}
\newcommand{\cO}{\mathcal{O}}
\newcommand{\bbE}{\mathbb{E}}
\newcommand{\bbP}{\mathbb{P}}
\newcommand{\cX}{\mathcal{X}}

\newcommand{\tr}{\!\top\!}
\newcommand{\scal}[2]{\left\langle{#1},{#2}\right\rangle}
\newcommand{\nor}[1]{\left\|{#1}\right\|}


\def\nonum{\nonumber}
\def\[{\begin{equation}}
\def\]{\end{equation}}
\def\NN{\mbox{I}\!\mbox{N}}

\def\nonum{\nonumber}

\def\[{\begin{equation}}
\def\]{\end{equation}}

\def\nonum{\nonumber}

\def\keywords#1{\gdef\@keywords{#1}}
\gdef\@keywords{}
\title{\LARGE \bf\scshape Kernel Methods for the Approximation of Some Key Quantities of Nonlinear Systems\footnote{Parts of this work were done while the authors were at  the Department of Mathematics of Duke University and then while the second author was with the Department of Mathematics of Imperial College London for a Marie Curie Fellowship and at the Fields Institute.}}


\author{Jake Bouvrie
\thanks{J. Bouvrie is with the Laboratory for Computational and Statistical Learning, 
Massachusetts Institute of Technology, Cambridge, MA, USA  {\tt\small jvb@csail.mit.edu}}
\and
Boumediene Hamzi
\thanks{B. Hamzi is with the Department of Mathematics, Ko\c{c} University, Istanbul, Turkey {\tt\small boumediene.hamzi@gmail.com}}%
}

\date{}

\begin{document}

\maketitle

\begin{abstract}
We introduce a data-based approach to estimating key quantities which arise in the
study of nonlinear control systems and random nonlinear dynamical systems. Our approach
hinges on the observation that much of the existing linear theory may be readily extended
to nonlinear systems -- with a reasonable expectation of success -- once the nonlinear system
has been mapped into a high or infinite dimensional feature space. In particular, we embed a nonlinear system in a reproducing kernel Hilbert space where linear theory can be used to  develop
computable, non-parametric estimators approximating controllability and observability energy functions for
nonlinear systems.
In all cases the relevant quantities are estimated from simulated or observed data.
It is then shown that the controllability energy estimator provides a key means for approximating
the invariant measure of an ergodic, stochastically forced nonlinear system.
\end{abstract}

{{\bf Keywords: } Nonlinear Systems, Machine Learning, Kernel Methods, Gramians, Controllability Energy, Observability Energy, Stationary Solution of the Fokker-Planck Equation}

%


\section{Introduction}
Personal computing has developed to the point where in many cases it ought to be easier to simulate a dynamical
system and analyze the empirical data, rather than attempt to study the system analytically. Indeed,
for large classes of nonlinear systems, numerical analysis may be the only viable option. Yet the
mathematical theory necessary to analyze dynamical systems on the basis of observed data is still
largely underdeveloped. In previous work the authors proposed a linear, data-based approach for model reduction
of nonlinear control systems~\cite{allerton}. The approach is based on lifting simulated trajectories
of the system into a high or infinite dimensional feature (Hilbert) space where the evolution of the
original system may be reasonably modeled as linear. One may then implicitly carry out linear balancing,
truncation, and model reduction in the feature space while retaining nonlinearity in the original
statespace.

In this paper, we continue under this setting and explore data-based definitions of key concepts for
nonlinear control and random dynamical systems. We propose an empirical approach for
estimation of the controllability and observability energies for stable nonlinear control systems, as well
as invariant  measures  for ergodic nonlinear stochastic differential
equations. Our methodology applies the relevant linear theory in a feature space where it is assumed
that the original nonlinear system behaves approximately linearly. In this case we leverage the well-known
connection between the controllability gramian of a linear control system and the invariant measure
associated to the corresponding linear stochastic differential equation. This relationship was previously
identified as useful for finding the controllability energy for certain nonlinear control systems given the
invariant measure of the corresponding randomly forced dynamical system~\cite{newman}. The approach
in~\cite{newman}, however, requires solving a Fokker-Planck equation and so applies to only a narrow class
of systems. Our approach takes the reverse direction in a data-driven setting: given an empirical estimate
of the controllability energy function, one can obtain an estimate of the invariant measure. In particular,
we will propose a consistent, data-based estimator for the controllability energy function of a
nonlinear control system, and show how it can be used to estimate the invariant measure for the corresponding stochastic differential equation (SDE).

The essential point of this paper is to illustrate that it is possible to find data-based estimates of nonlinear objects that allow  to characterize the qualitative behaviour of nonlinear control and random dynamical systems, without having to
solve a Hamilton-Jacobi-Bellman or Lyapunov equation in the case of nonlinear control systems, or a
Fokker-Planck equation in the case of nonlinear SDEs. The approach proposed here also highlights the
close interaction between control and random dynamical systems and  demonstrates how control
theoretic objects can be useful for studying random dynamical systems.
Our contribution should be seen as a step towards developing a mathematical, data-driven qualitative theory of dynamical systems which can be used to analyze and predict random dynamical systems, as well as offer
data-driven control strategies for nonlinear systems on the basis of observed data
rather than a pre-specified model.

Preliminary results of this work can be found in \cite{allerton, acc2012}.


\section{Linear Systems as a Paradigm for Working in RKHS: Background}\label{sec:bkgnd}
In this section we give a brief overview of some important background concepts in linear control,
random dynamical systems and reproducing kernel Hilbert spaces (RKHS). We will make use of the linear theory that follows after mapping the state variable of a nonlinear system into a suitable RKHS, thereby
harnessing RKHS theory as a framework for extending linear tools to nonlinear systems. The following
background material closely follows~\cite{dullerud,risken,brockett}.

\subsection{Linear Control Systems}\label{sec:linear-control}
Consider a linear control system
\[\label{linsys}\begin{array}{rcl}\dot{x}&=&Ax+Bu\\y&=&Cx \end{array}, \]
where $x\in\RR^n$, $u \in \RR^q$, $y \in \RR^p$, $(A,B)$ is controllable, $(A,C)$ is observable
and $A$ is Hurwitz. We define the controllability and the
observability Gramians as, respectively,
$W_c=\int_0^{\infty}e^{At}BB^{\tr}e^{A^{\tr}t}\, dt,$
$W_o=\int_0^{\infty}e^{A^{\tr}t}C^{\tr}Ce^{At}\, dt$.
These two matrices can be viewed as a measure of the controllability and the observability of the
system~\cite{moore}. For instance,
 consider the past energy~\cite{scherpen_thesis}, $L_c(x_0)$, defined
as the minimal energy required to reach $x_0$ from $0$ in infinite time
\[\label{L_c}
L_c(x_0)=\inf_{\substack{
u \in { L}_2(-\infty,0),\\ x(-\infty)=0, x(0)=x_0}}
\frac{1}{2}\int_{-\infty}^0\|u(t)\|^2\,dt,
\]
 and  the future energy~\cite{scherpen_thesis}, $L_o(x_0)$, defined as the output energy generated
by releasing the system from its initial state $x(t_0)=x_0$, and zero input $u(t)=0$ for $t\ge0$, i.e.
 \[\label{L_o}
 L_o(x_0)=\frac{1}{2}\int_{0}^{\infty}\|y(t)\|^2\,dt,
 \]
for $x(t_0)=x_0$ and $u(t)=0, t\ge0$.

In the linear case, it can be shown that
\begin{align}
L_c(x_0)&=\tfrac{1}{2}x_0^{\tr}W_c^{-1}x_0,\label{eqn:lin_Lc}\\
L_o(x_0)&=\tfrac{1}{2}x_0^{\tr}W_o x_0 \label{eqn:lin_Lo} .
\end{align}
Moreover, $W_c$ and $W_o$ satisfy the following Lyapunov equations~\cite{dullerud}:
\[\label{lyap_lin}\begin{array}{rcl}
AW_c+W_cA^{\tr}&=&-BB^{\tr},\\
A^{\tr}W_o+W_oA&=&-C^{\tr}C .
\end{array}
\]
These
energies are directly related to the controllability and observability operators.

\begin{definition}\cite{dullerud}
 Given a matrix pair $(A,B)$, the controllability operator $\Psi_c$ is defined as
\[\begin{array}{rcl} \Psi_c: L_2(-\infty,0)
&\rightarrow& \CC^n \\
 u &\mapsto& \int_{-\infty}^0 e^{-A\tau}Bu(\tau) d\tau \nonumber
\end{array}
\]
\end{definition}

The significance of this operator is made evident via the following optimal control problem: Given
the linear system $\dot{x}(t)=Ax(t)+Bu(t)$ defined for $t \in (-\infty,0)$ with $x(-\infty)=0$, and
for
 $x(0) \in \CC^n$ with unit norm, what is the minimum energy
input $u$ which drives the state $x(t)$ to $x(0)=x_0$ at time zero? That is, what is the
$u \in L_2(-\infty,0]$ solving $\Psi_cu=x_0$ with smallest norm $\|u\|_2$?
If $(A,B)$ is controllable, then $\Psi_c\Psi_c^{\ast}=:W_c$ is nonsingular, and the answer to the preceding
question is \[\label{uopt} u_{opt}:=\Psi_c^{\ast}W_c^{-1}x_0. \] The input energy is given by
\begin{eqnarray}
\|u_{opt}\|_2^2 &=& \langle \Psi_c^{\ast}W_c^{-1}x_0, \Psi_c^{\ast}W_c^{-1}x_0 \rangle \nonum \\
&=& \langle W_c^{-1}x_0, \Psi_c\Psi_c^{\ast}W_c^{-1}x_0 \rangle \nonum \\
&=& x_0^{\ast}W_c^{-1}x_0 \;.\nonum
\end{eqnarray}

Moreover, the reachable set through $u_{opt}$, i.e. the final states $x_0=\Psi_c u$ that can be reached given
an input $u \in L_2(-\infty,0]$ of unit norm,  $\{\Psi_cu: u \in L_2(-\infty,0] \;\mbox{and} \; \|u\|_2 \le 1
\}$ may be defined as
\[
\cR :=\{W_c^{\frac{1}{2}}x_c: x_c \in \CC^n \; \mbox{and} \; \|x_c\|\le 1 \}.
\]


Similarly, for the autonomous system
\[\begin{array}{rcl}
 \dot{x}&=& Ax, \quad x(0)=x_0\in \CC^n,\\
y&=&Cx \nonum
\end{array}
\]
where $A$ is Hurwitz, the observability operator is defined as follows.

\begin{definition} \cite{dullerud}
  Given a matrix pair $(A,C)$, where $A$ is Hurwitz, the observability operator $\Psi_o$ is defined as
\[
\begin{array}{rcl} \Psi_o: \CC^n
&\rightarrow& L_2(0,\infty) \\
 x_0 &\mapsto& \left\{
 \begin{array}{rcl}
 Ce^{At}x_0, & \text{for }  t \ge 0\\ 0, & \text{otherwise}
\end{array}\right.
\end{array}\nonum
\]
The corresponding observability ellipsoid is given by
$$
{\cal E}:=\{W_o^{\frac{1}{2}}x_0: x_0 \in \CC^n \; \mbox{and} \; \|x_0\|=1 \}.
$$
\end{definition}

The energy of the output signal $y=\Psi_o x_0 $, for $x_0 \in \CC^n$ can then be computed as
$$
\|y\|_2^2=\langle \Psi_ox_0, \Psi_ox_0
\rangle=\langle x_0, \Psi_o^{\ast}\Psi_ox_0
\rangle=\langle x_0, W_0x_0
\rangle
$$
where $\Psi_o^{\ast}: L_2[0,\infty) \rightarrow \CC^n $ is the adjoint of $\Psi_o$.

\subsection{Linear Stochastic Differential Equations}\label{sec:linear-sdes}
In this section, we review the relevant background for stochastically forced differential equations
 (see e.g.~\cite{risken,brockett} for more detail). Here we will consider stochastically excited dynamical control systems affine in the input $u\in\RR^q$
\[\label{control_nonlin}
\dot{x}=f(x)+G(x)u \;,
\]
where $G:\RR^n\to\RR^{n\times q}$ is a smooth matrix-valued function and $x\in\RR^n$. We
replace the control inputs by sample paths of white Gaussian noise processes, giving the corresponding stochastic differential equation
 \[\label{sde_nonlin}
 d{X_t}=f(X_t)dt + G(X_t) \circ dW_t^{(q)}
 \]
with $W_t^{(q)}$ a $q-$dimensional Brownian motion. The solution $X_t$ to this SDE is a Markov stochastic
 process with transition probability density $\rho(t,x)$. The time evolution of the probability density
$\rho(t,x)$
is described by the \emph{Fokker-Planck (or Forward Kolmogorov) equation}
\[\label{ito}\frac{\partial \rho}{\partial t}=-\langle \frac{\partial}{\partial x}, f\rho \rangle+
\frac{1}{2}\sum_{j,k=1}^{n}\frac{\partial^2}{\partial x_j \partial x_k}[(GG^{\tr})_{jk}\rho]=:\cL\rho  \;.
\]
The differential operator $\cL$ on the right-hand side is referred to as the \emph{Fokker-Planck operator}
associated to (\ref{sde_nonlin}). The \emph{steady-state probability density} for (\ref{sde_nonlin}) is a
solution of the equation \[\label{steady}\cL{\rho_{\infty}}(x)=0.\]




In the context of linear Gaussian theory where we are given an $n-$dimensional system of the
form \[\label{lin_sde}dX_t=AX_tdt+BdW_t^{(q)},\]
with $A \in \RR^{n \times n}$, $B \in \RR^{n \times q}$, the transition density is Gaussian.
It is therefore sufficient to find the mean and covariance of the solution $X(t)$ in order to uniquely determine the transition probability density. The mean satisfies
$\frac{d}{dt}{\EE}[X]=A {\EE}[X]$ and thus ${\EE}[X(t)]=e^{At}{\EE}[X(0)]$. If $A$
is Hurwitz, $\lim_{t \rightarrow \infty}{\EE}[X(t)]=0$.
The covariance satisfies $\frac{d}{dt}{\EE}[XX^{\tr}]=A{\EE}[XX^{\tr}]+{\EE}[XX^{\tr}]A
+BB^{\tr}$. This formulation gives the steady-state distribution's covariance matrix as ${\cal Q}=\lim_{t \rightarrow \infty}{\EE}[X_tX_t^{\tr}]$ so that we may find ${\cal Q}$ by solving the Lyapunov system $A{\cal Q}+{\cal Q}A^{\tr}=-BB^{\tr}$. Thus the solution ${\cal Q}$ is exactly the controllability
gramian
$$
{\cal Q}=W_c=\int_0^{\infty}e^{At}BB^{\tr}e^{A^{\tr}t}\, dt
$$
which is positive iff. the pair $(A,B)$ is controllable~\cite{brockett}. Combining the above facts, the steady-state probability density is
 given by
\[\label{eqn:rho_linear_theory}
\rho_{\infty}(x)=Z^{-1}e^{-\frac{1}{2}x^TW_c^{-1}x} = Z^{-1} e^{-L_c(x)}
\]
using (\ref{L_c}) and letting $Z=\sqrt{(2\pi)^n \mbox{det}(W_c)}$.

Equation~\eqref{eqn:rho_linear_theory} suggests the following key observations in the linear setting:
\begin{itemize}
\item {\em Given an approximation $\hat{L}_c$ of $L_c$ we obtain an approximation for $\rho_{\infty}$ of the form}
\[\label{rho_approx}
\hat{\rho}_{\infty}(x) \propto e^{-\hat{L}_c(x)}
\]
\item {\em Given an approximation $\hat{\rho}_{\infty}$ of $\rho_{\infty}$ we obtain an approximation for
$L_c(x)$ by solving}
\[\label{Lc_approx}
\hat{L}_c(x)= -\ln[\hat{\rho}_{\infty}(x)]+C .
\]
\end{itemize}

We note that these approximations have been used in different contexts to study nonlinear control and random dynamical systems. For instance, in \cite{butchart}, Equation~\eqref{rho_approx} was used to find explicit solutions of the Fokker-Planck equation for systems where a Lyapunov equation for the unforced system can be found and solved. In~\cite{newman}, Equation~\eqref{Lc_approx} was used, given an explicit solution to the Fokker-Planck equation, to approximate the controllability energy and subsequently applied to the problem of model reduction for nonlinear control systems.

Although the above relationship between $\rho_{\infty}$ and $L_c$ holds for only a small class of systems (e.g. linear and some Hamiltonian systems), by mapping a nonlinear system into a suitable reproducing kernel Hilbert space we may reasonably extend this connection to a broad class of nonlinear systems. We will return to this topic in Section~\ref{sec:nonlinear-sdes} after defining kernel Hilbert spaces and introducing gramians in RKHS.


\subsection{Reproducing Kernel Hilbert Spaces}
We give a brief overview of reproducing kernel Hilbert spaces as used in statistical learning
theory. The discussion here borrows heavily from~\cite{cucker,smola,Wahba}. Early work developing
the theory of RKHS was undertaken by N. Aronszajn~\cite{AronRKHS}.

\begin{definition} Let  ${\cal H}$  be a Hilbert space of functions on a set ${\cal X}$.
Denote by $\langle f, g \rangle$ the inner product on ${\cal H}$   and let $\|f\|= \langle f, f \rangle^{1/2}$
be the norm in ${\cal H}$, for $f$ and $g \in {\cal H}$. We say that ${\cal H}$ is a reproducing kernel
Hilbert space (RKHS) if there exists a function $K:{\cal X} \times {\cal X} \rightarrow \RR$
such that
\begin{itemize}
 \item[i.] $K_x:=K(x,\cdot)\in\cH$ for all $x\in\cX$.
\item[ii.] $K$ spans ${\cal H}$: ${\cal H}=\overline{\mbox{span}\{K_x~|~x \in {\cal X}\}}$.
 \item[iii.] $K$ has the {\em reproducing property}:
$\forall f \in {\cal H}$, $f(x)=\langle f,K_x \rangle$.

\end{itemize}
$K$ will be called a reproducing kernel of ${\cal H}$. ${\cal H}_K$  will denote the RKHS ${\cal H}$
with reproducing kernel  $K$ where it is convenient to explicitly note this dependence.
\end{definition}

\begin{definition}[Mercer kernels]
A function $K:{\cal X} \times {\cal X} \rightarrow \RR$ is called a Mercer kernel if it is continuous,
symmetric and positive definite.
\end{definition}

The important properties of reproducing kernels are summarized in the following proposition.
\begin{proposition}\label{prop1} If $K$ is a reproducing kernel of a Hilbert space ${\cal H}$, then
\begin{itemize}
\item[i.] $K(x,y)$ is unique.
\item[ii.]  $\forall x,y \in {\cal X}$, $K(x,y)=K(y,x)$ (symmetry).
\item[iii.] $\sum_{i,j=1}^q\alpha_i\alpha_jK(x_i,x_j) \ge 0$ for $\alpha_i \in \RR$, $x_i \in {\cal X}$ and $q\in\mathbb{N}_+$
(positive definiteness).
\item[iv.] $\langle K(x,\cdot),K(y,\cdot) \rangle=K(x,y)$.
\end{itemize}
\end{proposition}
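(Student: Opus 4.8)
The plan is to derive all four claims directly from the defining reproducing property $f(x)=\langle f,K_x\rangle$, exploiting the fact that each slice $K_x=K(x,\cdot)$ is itself an element of $\cH$ (property i of the RKHS definition), so that the reproducing property may be applied with $f$ taken to be another kernel slice.

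First I would establish iv and ii simultaneously, since both fall out of a single evaluation. Applying the reproducing property to $f=K_y\in\cH$ at the point $x$ gives $K(y,x)=K_y(x)=\langle K_y,K_x\rangle$, and symmetrically, applying it to $f=K_x$ at the point $y$ gives $K(x,y)=\langle K_x,K_y\rangle$. Because $\cH$ is a real Hilbert space its inner product is symmetric, so the two right-hand sides agree, yielding at once $\langle K_x,K_y\rangle=K(x,y)$ (property iv) and $K(x,y)=K(y,x)$ (property ii). Property iii then requires no further machinery: substituting iv into the quadratic form and using bilinearity,
\[
\sum_{i,j=1}^q \alpha_i\alpha_j K(x_i,x_j)=\Big\langle \sum_{i=1}^q \alpha_i K_{x_i},\,\sum_{j=1}^q \alpha_j K_{x_j}\Big\rangle=\Big\|\sum_{i=1}^q \alpha_i K_{x_i}\Big\|^2\ge 0,
\]
so the sum is a genuine squared norm and hence nonnegative.

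For uniqueness (property i) I would argue as follows. Suppose $K$ and $K'$ are both reproducing kernels for $\cH$. Fix $x$; then $K_x-K'_x\in\cH$, and for every $f\in\cH$ the reproducing property applied with each kernel gives $\langle f,\,K_x-K'_x\rangle=f(x)-f(x)=0$. Choosing $f=K_x-K'_x$ forces $\|K_x-K'_x\|^2=0$, hence $K_x=K'_x$ as elements of $\cH$; evaluating at an arbitrary $y$ gives $K(x,y)=K'(x,y)$, and since $x$ was arbitrary, $K=K'$.

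None of these steps presents a genuine difficulty, as each is a short consequence of the definition; the only point demanding care — and the closest thing to an obstacle — is the final step of i, namely the passage from vanishing $\cH$-norm to pointwise equality of the two kernels. This is legitimate precisely because an RKHS is a space of bona fide functions on $\cX$ rather than equivalence classes modulo a null set, so two elements at zero distance are literally the same function; the continuity of point evaluation guaranteed by the reproducing property is what underwrites this identification.
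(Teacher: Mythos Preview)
Your argument is correct and is the standard one found in the references the paper cites (Aronszajn, Cucker--Smale, Sch\"olkopf--Smola, Wahba). Note, however, that the paper itself does not supply a proof of this proposition: it is listed as background material in the RKHS review section and left unproved, so there is no in-paper argument to compare against. Your derivation of ii and iv from a single application of the reproducing property, the norm-squared computation for iii, and the orthogonality argument for uniqueness are exactly the classical route; the closing remark about RKHS elements being genuine pointwise-defined functions rather than $L^2$-type equivalence classes is the right justification for the last step.
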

Common examples of Mercer kernels defined on a compact domain $\cX\subset\RR^n$ are
$K(x,y)=x\cdot y$ (Linear), $K(x,y)=(1+x\cdot y)^d$ for $d \in \NN_+$ (Polynomial), and
 $K(x,y)=e^{-\|x-y\|^2_2/\sigma^2}, \sigma >0$ (Gaussian).

\begin{theorem} \label{thm1}
Let $K:{\cal X} \times {\cal X} \rightarrow \RR$ be a symmetric and positive definite function. Then there
exists a Hilbert space of functions ${\cal H}$ defined on ${\cal X}$   admitting $K$ as a reproducing Kernel.
Conversely, let  ${\cal H}$ be a Hilbert space of functions $f: {\cal X} \rightarrow \RR$ satisfying
$\forall x \in {\cal X}, \exists \kappa_x>0,$ such that $|f(x)| \le \kappa_x \|f\|_{\cal H},
\quad \forall f \in {\cal H}. $
Then ${\cal H}$ has a reproducing kernel $K$.
\end{theorem}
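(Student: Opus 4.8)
The plan is to prove the two assertions separately: for the forward (existence) direction I would carry out the standard Moore--Aronszajn construction, and for the converse I would appeal to the Riesz representation theorem.

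First, for existence, I would begin with the linear span $\mathcal{H}_0 = \mathrm{span}\{K_x \mid x \in \mathcal{X}\}$ of kernel sections and equip it with the bilinear form $\langle \sum_i \alpha_i K_{x_i}, \sum_j \beta_j K_{y_j} \rangle := \sum_{i,j} \alpha_i \beta_j K(x_i,y_j)$. I would check that this form is well defined (independent of the chosen representations of the two elements), that it is symmetric by the symmetry of $K$, and that it is positive semidefinite by the positive definiteness recorded in Proposition~\ref{prop1}(iii). The key observation is that the reproducing identity $\langle f, K_x\rangle = f(x)$ already holds on $\mathcal{H}_0$ by construction; combining this with Cauchy--Schwarz yields $|f(x)|^2 \le K(x,x)\,\langle f,f\rangle$, which forces $\langle f,f\rangle = 0 \Rightarrow f \equiv 0$ and thereby upgrades the form to a genuine inner product.

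Next I would complete $\mathcal{H}_0$ to a Hilbert space $\mathcal{H}$. The delicate step, and what I expect to be the main obstacle, is to realize this abstract completion concretely as a space of \emph{functions} on $\mathcal{X}$ rather than as equivalence classes of Cauchy sequences, and then to confirm that the reproducing property survives the limit. The same Cauchy--Schwarz bound does the work: any Cauchy sequence $\{f_n\}$ satisfies $|f_n(x) - f_m(x)| \le K(x,x)^{1/2}\,\|f_n - f_m\|$, so $f_n(x)$ converges for each $x$ to a limit function $f$, and one identifies $\mathcal{H}$ with the collection of such pointwise limits. Passing to the limit in $\langle f_n, K_x\rangle = f_n(x)$ and invoking continuity of the inner product then gives $f(x) = \langle f, K_x\rangle$ for every $f \in \mathcal{H}$, which establishes properties (i)--(iii) of the RKHS definition with $K$ as the reproducing kernel.

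Finally, for the converse, the hypothesis $|f(x)| \le \kappa_x \|f\|_{\mathcal{H}}$ says precisely that for each fixed $x$ the evaluation functional $L_x : f \mapsto f(x)$ is a bounded linear functional on $\mathcal{H}$. By the Riesz representation theorem there is then a unique element $K_x \in \mathcal{H}$ with $f(x) = \langle f, K_x\rangle$ for all $f \in \mathcal{H}$. Setting $K(x,y) := \langle K_x, K_y\rangle = K_y(x)$ produces a function on $\mathcal{X}\times\mathcal{X}$ whose reproducing property holds by construction; its symmetry and positive definiteness then follow exactly as in Proposition~\ref{prop1}, since $K$ is now a bona fide reproducing kernel, which completes the argument.
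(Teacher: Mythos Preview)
Your argument is the standard Moore--Aronszajn construction together with the Riesz representation step for the converse, and it is correct as outlined. Note, however, that the paper does not actually supply a proof of this theorem: it is stated as background material in the RKHS review section and attributed to the literature (Aronszajn~\cite{AronRKHS}, with expositions in~\cite{cucker,smola,Wahba}). So there is no in-paper proof to compare against; what you have written is precisely the classical proof one finds in those references.
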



\begin{theorem}\label{thm4}
 Let $K(x,y)$ be a positive definite kernel on a compact domain or a manifold $X$. Then there exists a Hilbert
space $\mathcal{F}$  and a function $\Phi: X \rightarrow \mathcal{F}$ such that
$$K(x,y)= \langle \Phi(x), \Phi(y) \rangle_{\cF} \quad \mbox{for} \quad x,y \in X.$$
 $\Phi$ is called a feature map, and $\mathcal{F}$ a feature space\footnote{The dimension of the feature space can be infinite, for example in the case of the Gaussian kernel.}.
\end{theorem}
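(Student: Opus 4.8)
The plan is to build the feature space and feature map directly from the RKHS machinery already in hand, taking the feature space to be the RKHS itself. By Theorem~\ref{thm1}, since $K$ is symmetric and positive definite, there exists a Hilbert space $\cH_K$ of functions on $X$ admitting $K$ as its reproducing kernel. First I would set $\cF := \cH_K$ and define the canonical feature map $\Phi: X \to \cF$ by $\Phi(x) := K_x = K(x,\cdot)$, which lies in $\cF$ by property (i) of the RKHS definition. The desired identity is then immediate from property (iv) of Proposition~\ref{prop1}: $\langle \Phi(x), \Phi(y)\rangle_{\cF} = \langle K_x, K_y\rangle = K(x,y)$. Because everything has been prepared by the preceding results, there is essentially no obstacle here — the genuine content of the theorem is that Theorem~\ref{thm1} guarantees $\cH_K$ exists at all.

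An alternative, more explicit construction uses Mercer's theorem and is worth recording since it exhibits $\cF$ concretely as a sequence space, making the non-uniqueness of the feature map transparent. Here I would use the compactness of $X$ together with the continuity of $K$ (which holds since $K$ is a Mercer kernel) to invoke Mercer's theorem, yielding a countable orthonormal system of eigenfunctions $\{\phi_j\}$ with nonnegative eigenvalues $\{\lambda_j\}$ such that $K(x,y) = \sum_j \lambda_j \phi_j(x)\phi_j(y)$, the series converging absolutely and uniformly on $X\times X$. Then I would take $\cF = \ell^2$ and define $\Phi(x) = (\sqrt{\lambda_j}\,\phi_j(x))_{j}$, so that $\langle \Phi(x),\Phi(y)\rangle_{\ell^2} = \sum_j \lambda_j \phi_j(x)\phi_j(y) = K(x,y)$.

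The main obstacle in this second route — entirely absent from the first — is verifying that $\Phi(x)$ actually lands in $\ell^2$ and that the defining series converges, which is precisely where Mercer's uniform convergence is needed; on a compact domain this follows since $\sum_j \lambda_j \phi_j(x)^2 = K(x,x) < \infty$ pointwise. I would therefore present the canonical RKHS construction as the primary proof, since it requires no convergence analysis and follows in a single line from the tools already established, and relegate the Mercer-based map to a remark showing that a concrete coordinate realization of $\cF$ is available whenever $K$ is continuous on a compact domain.
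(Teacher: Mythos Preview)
Your proposal is correct, and your primary construction --- taking $\cF = \cH_K$ and $\Phi(x) = K_x$ --- is exactly the one the paper indicates immediately following the theorem statement (the theorem itself is stated without proof as standard background). Your additional Mercer-based realization goes beyond what the paper provides but is a perfectly reasonable supplementary remark.
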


Given Theorem~\ref{thm4}, and property [iv.] in Proposition~\ref{prop1}, note that we can take
$\Phi(x):=K_x:=K(x,\cdot)$ in which case $\mathcal{F}=\cH$ -- the ``feature space'' is the RKHS itself, as opposed to an isomorphic space.
We will make extensive use of this feature map. The fact that Mercer kernels are positive definite and
symmetric is also key; these properties ensure that kernels induce positive, symmetric matrices and
integral operators, reminiscent of similar properties enjoyed by gramians and covariance matrices.
Finally,  in practice one typically first chooses a Mercer kernel in order to choose an RKHS:
Theorem~\ref{thm1} guarantees the existence of a Hilbert space admitting such a function as its
reproducing kernel.

A key observation however, is that working in RKHS allows one to immediately find nonlinear versions of
 algorithms which can be expressed in terms of inner products. Consider an algorithm expressed in
terms of the inner product $\langle x, x^{\prime} \rangle_{\cX}$ with $x, x^{\prime} \in \cX$. Now assume that
instead of looking at a state $x$, we look at its $\Phi$ image in $\cH$,
\[\label{eqn:phi-mapped-data}
\begin{array}{rccl}
\Phi&:& X & \rightarrow {\cal H} \\
& & x &\mapsto \Phi(x) \;.
\end{array}\]
In the RKHS, the inner product $\langle \Phi(x), \Phi(x^{\prime}) \rangle$ is
\[\langle \Phi(x), \Phi(x^{\prime}) \rangle= K(x,x^{\prime}) \]
by the reproducing property. Hence, a nonlinear variant of the original algorithm may be implemented
using kernels in place of inner products on $\cX$.

\section{Empirical Gramians in RKHS}
In this Section we recall empirical gramians for linear systems~\cite{moore}, as well as a notion of empirical gramians for nonlinear systems in RKHS introduced in~\cite{allerton}. The goal of the construction we describe here is to provide meaningful, data-based empirical controllability and observability gramians for nonlinear systems. In~\cite{allerton}, observability and controllability gramians were used for balanced model reduction, however here we will use these quantities to analyze nonlinear control properties and random dynamical systems. We note that a related notion of gramians for nonlinear systems is briefly discussed in~\cite{gray}, however no method for computing or estimating them was given.

\subsection{Empirical Gramians for Linear Systems}\label{sec:linear_gramians}
To compute the Gramians for the linear system (\ref{linsys}), one can attempt to solve the
Lyapunov equations (\ref{lyap_lin}) directly although this can be computationally prohibitive. For linear systems, the gramians may be approximated by way of matrix multiplications implementing primal and adjoint systems (see the method of snapshots, e.g.~\cite{Rowley05}). Alternatively, for any system, linear or nonlinear, one may take the simulation based approach introduced by B.C. Moore~\cite{moore} for reduction of linear systems, and subsequently extended to nonlinear systems in~\cite{lall}. The method
proceeds by exciting each coordinate of the input with impulses from the zero initial state $(x_0=0)$. The system's responses are sampled, and the sample covariance is taken as an approximation to the controllability gramian. Denote the set of canonical orthonormal basis vectors in $\RR^n$ by $\{e_i\}_{i}$. Let $u^i(t) = \delta(t)e_i$ be the input signal for the $i$-th simulation, and let $x^i(t)$ be the corresponding response of the system. Form the matrix $X(t) = \bigl[x^1(t)
~\cdots~ x^q(t)\bigr] \in \RR^{n\times q}$, so that $X(t)$ is seen as a data matrix with
column observations given by the respective responses $x^i(t)$. Then the $(n\times n)$ controllability gramian  is given by
\[
W_{c,\lin} = \frac{1}{q}\int_0^{\infty}X(t)X(t)^{\tr} dt.
\]
We can approximate this integral by sampling the matrix function $X(t)$ within a finite time interval $[0,T]$
assuming for instance the regular partition $\{t_i\}_{i=1}^N, t_i = (T/N)i$. This leads to the {\em empirical controllability gramian}
\[\label{eqn:Wchat_lin}
\widehat{W}_{c,\lin} = \frac{T}{Nq}\sum_{i=1}^N X(t_i)X(t_i)^{\tr} .
\]

The observability gramian is estimated by
fixing $u(t) = 0$, setting $x_0 = e_i$ for $i=1,\ldots,n$, and measuring the corresponding system output
responses $y^i(t)$. Now assemble the output responses into a matrix $Y(t) = [y^1(t) ~\cdots~ y^n(t)]\in
\RR^{p\times n}$. The $(n\times n)$ observability gramian $W_{o,\lin}$ and its empirical
counterpart $\widehat{W}_{o,\lin}$ are respectively given by
\[
W_{o,\lin} = \frac{1}{p}\int_0^{\infty}Y(t)^{\tr}Y(t) dt\]
and
\[\label{eqn:Wohat_lin}
\widehat{W}_{o,\lin} = \frac{T}{Np}\sum_{i=1}^N \widetilde{Y}(t_i)\widetilde{Y}(t_i)^{\tr}
\]
where $\widetilde{Y}(t) = Y(t)^{\tr}$.
The matrix $\widetilde{Y}(t_i)\in\RR^{n\times p}$ can be thought of as a data matrix with column observations
\begin{equation}\label{eqn:obs_data}
d_j(t_i) = \bigl(y_j^1(t_i), \ldots, y_j^n(t_i)\bigr)^{\!\tr} \in\RR^n,
\end{equation}
for $j=1,\ldots,p, \,\,i=1,\ldots, N$ so that $d_j(t_i)$ corresponds to the response at time $t_i$ of the
single output coordinate $j$ to each of
the (separate) initial conditions $x_0=e_k, k=1,\ldots,n$.

\subsection{Empirical Gramians for Nonlinear Systems in RKHS}\label{sec:rkhs-gramians}
Consider the generic nonlinear system
\[\label{sigma}
\left\{\begin{array}{rcl}\dot{x}&=&F(x,u)\\ y &=& h(x), \end{array}\right.
\]
with $x \in \RR^n$, $u \in \RR^q$, $y\in \RR^p$, $F(0)=0$ and $h(0)=0$.
Assume that  the linearization of~\eqref{sigma} around the origin is controllable, observable and
$A=\frac{\partial F}{\partial x}|_{x=0}$ is asymptotically stable.

RKHS counterparts to the empirical quantities~\eqref{eqn:Wchat_lin},\eqref{eqn:Wohat_lin} defined above for the system~\eqref{sigma} can be defined by considering feature-mapped lifts of the simulated samples in $\cH_K$. In the following, and without loss of generality,
{\em we assume the data are centered in feature space}, and that the observability
samples and controllability samples are centered separately. See~(\cite{smola}, Ch. 14) for a
discussion on implicit data centering in RKHS with kernels.

First, observe that the gramians $\widehat{W}_c, \widehat{W}_o$ can be viewed as the sample covariance of a collection of $N\cdot q, N\cdot p$ vectors in $\RR^n$ scaled by $T$, respectively. Then applying $\Phi$ to the samples as in~\eqref{eqn:phi-mapped-data}, we obtain the corresponding gramians in the RKHS associated to $K$ as bounded linear operators on $\cH_K$:
\begin{align}
\widehat{W}_c &= \frac{T}{Nq}\sum_{i=1}^N\sum_{j=1}^q \Phi(x^j(t_i))\otimes \Phi(x^j(t_i)) \label{eqn:emp_Wc_rkhs}\\
\widehat{W}_o &= \frac{T}{Np}\sum_{i=1}^N\sum_{j=1}^p \Phi(d_j(t_i))\otimes\Phi(d_j(t_i))\nonumber
\end{align}
where the samples $x_j,d_j$ are as defined in Section~\ref{sec:linear_gramians}, and $a\otimes b=a\scal{b}{\cdot}$ denotes the tensor product in $\cH$. From here on we will use the notation $W_c, W_o$ to refer to RKHS versions of
the true (integrated) gramians, and $\widehat{W}_c, \widehat{W}_o$ to refer to RKHS versions of the empirical gramians.

Let $\bPsi$ denote the matrix whose columns are the (scaled) observability samples mapped into feature space by $\Phi$, and let $\bPhi$ be the matrix similarly built from the feature space representation of the
controllability samples. Then we may alternatively express the gramians above as
$\widehat{W}_c=\bPhi\bPhi^{\tr}$ and $\widehat{W}_o=\bPsi\bPsi^{\tr}$, and define two other important quantities:
\begin{itemize}\itemsep 0pt
\item The \emph{controllability kernel matrix} $K_c\in\RR^{Nq\times Nq}$ of kernel
products
\begin{align}
K_c &= \bPhi^{\tr}\bPhi \\
(K_c)_{\mu\nu} &= K(x_\mu, x_\nu) = \scal{\Phi(x_\mu)}{\Phi(x_\nu)}_{\cF}
\end{align}
for $\mu,\nu=1,\ldots,Nq$ where we have re-indexed the set of vectors  $\{x^{j}(t_i)\}_{i,j} =
\{x_{\mu}\}_{\mu}$ to use a single linear index.
\item The \emph{observability kernel matrix} $K_o\in\RR^{Np\times Np}$,
\begin{align}
K_o &= \bPsi^{\tr}\bPsi\\
(K_o)_{\mu\nu} &= K(d_\mu, d_\nu) = \scal{\Phi(d_\mu)}{\Phi(d_\nu)}_{\cF}
\end{align}
for $\mu,\nu=1,\ldots,Np$, where we have again re-indexed the set $\{d_j(t_i)\}_{i,j}=\{d_\mu\}_{\mu}$ for
simplicity.
\item  The \emph{Hankel kernel matrix} $K_{o,c}\in\RR^{Np\times Nm}$,
\begin{align} K_{o,c} &= \bPsi^{\tr}\bPhi \\ (K_{o,c})_{\mu\nu} & = K(d_\mu, x_\nu) = \scal{\Phi(d_\mu)}{\Phi(x_\nu)}_{\cal H} \end{align}
for $\mu=1,\ldots,Np$, $\nu=1,\ldots,Nm$.

\end{itemize}
Note that $K_c,K_o, K_{o,c} $ may be highly ill-conditioned. The SVD
may be used to show that $\widehat{W}_c$ and $K_c$ ($\widehat{W}_o$ and $K_o$) have the same singular values (up to zeros).

\section{Nonlinear Control Systems in RKHS}
In this section, we introduce empirical versions of the controllability and observability energies~\eqref{L_c}-\eqref{L_o} for stable
nonlinear control systems of the form~\eqref{control_nonlin}, that can be estimated from observed data. Our underlying assumption is that a given nonlinear system may be treated as if it were linear in a suitable feature space. That reproducing kernel Hilbert spaces provide rich representations capable of capturing strong nonlinearities in the original input (data) space lends validity to this assumption.

In general little is known about the energy functions in the nonlinear setting. However,
Scherpen~\cite{scherpen_thesis} has shown that the energy functions $L_c(x)$ and $L_o(x)$ defined in~\eqref{L_c} and~\eqref{L_o} satisfy a Hamilton-Jacobi and a Lyapunov equation, respectively.
\begin{theorem}\label{thm:scherp1}\cite{scherpen_thesis} Consider the nonlinear control system (\ref{sigma}) with $F(x,u)=f(x)+G(x)u$. If the origin is an asymptotically stable equilibrium
of $f(x)$ on a neighborhood $W$ of the origin, then for
all $x \in W$, $L_o(x)$ is the unique smooth solution of
\[\label{Lo_hjb} \frac{\partial L_o}{\partial x}(x)f(x)+\frac{1}{2}h^{\tr}(x)h(x)=0,\quad L_o(0)=0 \]
under the assumption that (\ref{Lo_hjb}) has a smooth solution on $W$. Furthermore for all $x \in W$, $L_c(x)$
is the unique smooth solution of
\[\label{Lc_hjb} \frac{\partial L_c}{\partial x}(x)f(x)+\frac{1}{2} \frac{\partial L_c}{\partial
x}(x)g(x)g^{\tr}(x)  \frac{\partial^{\tr}L_c}{\partial x}(x)=0,\; L_c(0)=0\]
under the assumption that (\ref{Lc_hjb}) has a smooth solution $\bar{L}_c$ on $W$ and that the origin is an
asymptotically stable equilibrium of $-(f(x)+g(x)g^{\tr}(x) \frac{\partial \bar{L}_c}{\partial x}(x))$ on $W$.
\end{theorem}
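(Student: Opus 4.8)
The plan is to recognize both claims as instances of the verification method from optimal control and dynamic programming, and to dispatch the observability energy first since it is the easier, non-optimized case. For $L_o$ I would start from the defining integral \eqref{L_o} under the autonomous dynamics $\dot x = f(x)$, $y=h(x)$, and exploit the semigroup property of the flow: writing $\phi_s(x_0)$ for the solution at time $s$ with $\phi_0(x_0)=x_0$, additivity of the integral gives $L_o(x_0) = \tfrac12\int_0^s \|h(\phi_t(x_0))\|^2\,dt + L_o(\phi_s(x_0))$. Differentiating this identity in $s$ at $s=0$ and using $\tfrac{d}{ds}\phi_s(x_0)\big|_{s=0} = f(x_0)$ produces the first-order linear PDE $\frac{\partial L_o}{\partial x}(x)f(x) + \tfrac12 h^\tr(x)h(x)=0$ of \eqref{Lo_hjb}, and the boundary condition $L_o(0)=0$ is immediate from $h(0)=0$ together with invariance of the equilibrium.

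For uniqueness of $L_o$ I would integrate along characteristics. If $V$ is any smooth solution of \eqref{Lo_hjb}, then $\frac{d}{dt}V(\phi_t(x_0)) = \frac{\partial V}{\partial x}(\phi_t(x_0))f(\phi_t(x_0)) = -\tfrac12\|h(\phi_t(x_0))\|^2$, so integrating from $0$ to $\infty$ gives $V(x_0) - V(\phi_\infty(x_0)) = L_o(x_0)$. Asymptotic stability of the origin for $f$ on $W$ forces $\phi_\infty(x_0)=0$ and hence $V(x_0)=V(0)+L_o(x_0)=L_o(x_0)$. This is exactly where the stability hypothesis is indispensable: it guarantees convergence of the improper integral and pins the value at the endpoint, so there is no freedom left in $V$.

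For $L_c$ the situation is a genuine optimal control problem \eqref{L_c} over inputs on $(-\infty,0]$, and here I would follow the Hamilton-Jacobi-Bellman route (writing $g$ for the input matrix $G$ to match \eqref{Lc_hjb}). Applying the principle of optimality over a short backward interval $[-\delta,0]$ and expanding $L_c(x(-\delta)) \approx L_c(x_0) - \delta\,\frac{\partial L_c}{\partial x}(f + gu)$ yields, after cancellation and letting $\delta\to 0$, the pointwise minimization $0 = \inf_u\{\tfrac12\|u\|^2 - \frac{\partial L_c}{\partial x}(f + gu)\}$. The unconstrained minimizer is the optimal feedback $u^\ast = g^\tr \frac{\partial^\tr L_c}{\partial x}$; substituting it back collapses the bracket to the Hamilton-Jacobi equation \eqref{Lc_hjb}, with $L_c(0)=0$ since no energy is needed to remain at the origin.

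The genuinely delicate step, and the one I expect to be the main obstacle, is showing that a smooth solution $\bar L_c$ of \eqref{Lc_hjb} actually coincides with the value function rather than merely solving the PDE formally. I would use a completion-of-squares verification: for any admissible $u$ steering $0$ to $x_0$, the HJB identity gives $\tfrac12\|u\|^2 = \frac{\partial \bar L_c}{\partial x}(f + gu) + \tfrac12\|u - u^\ast\|^2$, and integrating over $(-\infty,0]$ yields $\tfrac12\int_{-\infty}^0\|u\|^2\,dt = \bar L_c(x_0) - \bar L_c(0) + \tfrac12\int_{-\infty}^0\|u-u^\ast\|^2\,dt \ge \bar L_c(x_0)$, with equality exactly along $u=u^\ast$. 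Thus $\bar L_c$ is a lower bound attained by the optimal feedback, \emph{provided} the optimal closed-loop trajectory is itself admissible, i.e. it emanates from the origin as $t\to-\infty$. That admissibility is precisely what the hypothesis that the origin is asymptotically stable for $-(f + g g^\tr \frac{\partial \bar L_c}{\partial x})$ supplies: running the optimal closed loop backward in time drives the state to $0$, so $\bar L_c(x(-\infty))=\bar L_c(0)=0$ and the bound is tight, giving $\bar L_c = L_c$. The single ingredient I would simply assume, as the statement does, is the existence and smoothness of $\bar L_c$ on $W$, which cannot be taken for granted a priori for nonlinear Hamilton-Jacobi equations and is the reason the theorem is stated conditionally.
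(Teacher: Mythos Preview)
Your argument is correct and is essentially the standard proof due to Scherpen: differentiate $L_o$ along the autonomous flow to obtain the Lyapunov-type PDE and integrate along characteristics for uniqueness; derive the HJB equation for $L_c$ via the dynamic programming principle, identify the optimal feedback $u^\ast = g^{\tr}\partial^{\tr}\bar L_c/\partial x$, and close with a completion-of-squares verification, the backward-time stability hypothesis ensuring admissibility of the optimal trajectory. There is nothing to compare against in the present paper, however: the theorem is quoted from~\cite{scherpen_thesis} without proof, the authors' explicit purpose being to \emph{avoid} solving \eqref{Lo_hjb}--\eqref{Lc_hjb} in favor of the data-based RKHS estimators developed in the remainder of the section.
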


 Various methods have been proposed to find approximate solutions of the PDEs the PDEs (\ref{Lo_hjb})- (\ref{Lc_hjb}).   For instance, Taylor series expansions have been proposed in \cite{krener1,krener2,fujimoto}. Newman and Krishnaprasad~\cite{newman} introduce a statistical approximation based on exciting the system with white Gaussian noise and then computing the balancing transformation using an algorithm from differential topology. 
We would like to avoid solving explicitly the PDEs (\ref{Lo_hjb})- (\ref{Lc_hjb}) and instead find good
estimates of their solutions directly from simulated or observed data.

\subsection{Energy Functions}\label{sec:energy_fns}
Following the linear theory developed in Section~\ref{sec:linear-control}, we would like to
define analogous controllability and observability energy functions paralleling~\eqref{eqn:lin_Lc}-\eqref{eqn:lin_Lo}, but adapted to the nonlinear setting. We first treat the controllability
function. Let $\mu_{\infty}$ on the statespace $\cX$ denote the unknown invariant measure of the nonlinear system~\eqref{sigma} when driven by white Gaussian noise. We will consider here the case where the controllability samples $\{x_i\}_{i=1}^m$ are i.i.d. random draws from $\mu_{\infty}$, and $\cX$ is a compact subset of $\RR^n$. The former assumption is implicitly made in much of the empirical balancing literature, and if a system is simulated for long time intervals, it should hold approximately in practice. If we take $\Phi(x)=K_x$, the infinite-data limit of~\eqref{eqn:emp_Wc_rkhs} is given by
\[\label{eqn:covop_gramian}
W_c = \bbE_{\mu_{\infty}} [\widehat{W}_{c}] = \int_{\cX}\scal{\cdot}{K_x}K_xd\mu_{\infty}(x).
\]

In general neither $W_c$ nor its empirical approximation $\widehat{W}_c$ are invertible, so to define a controllability energy similar to~\eqref{eqn:lin_Lc} one is tempted to define $L_c$ on $\cH$ as
\mbox{$L_c(h)=\scal{W_c^{\dag}h}{h}$}, where $A^{\dag}$ denotes the pseudoinverse of the operator $A$. However, the domain of $W_c^{\dag}$ is equal to the range of $W_c$, and so in general $K_x$ may not be in the domain of $W_c^{\dag}$. We will therefore introduce the orthogonal projection $W_c^{\dag}W_c$ mapping $\cH\mapsto\text{range}(W_c)$ and define the nonlinear control energy on $\cH$ as
\begin{equation}\label{eqn:best_lc}
L_c(h) = \scal{W_c^{\dag}(W_c^{\dag}W_c)h}{h}.
\end{equation}
We will consider finite sample approximations to~\eqref{eqn:best_lc}, however a further complication is that  $\widehat{W}_c^{\dag}\widehat{W}_c$ may not converge to $W_c^{\dag}W_c$ in the limit of infinite data (taking the pseudoinverse is not a continuous operation), and $\widehat{W}_c^{\dag}$ can easily be ill-conditioned in any event. Thus one needs to impose regularization, and we replace the pseudoinverse $A^{\dag}$ with a regularized inverse $(A + \lambda I)^{-1}, \lambda > 0$ throughout. We note that the preceding observations were also made in~\cite{RosascoDensity}. Intuitively,
regularization prevents the estimator from overfitting to a bad or unrepresentative sample
of data. We thus define the estimator \mbox{$\hat{L}_c:\cX\to\RR_+$} (that is, on the domain $\{K_x~|~ x\in\cX\}\subseteq\cH$) to be
\begin{equation}\label{eqn:rkhs_lc_def}
\hat{L}_c(x)=\tfrac{1}{2}\bigl\langle(\widehat{W}_c + \lambda I)^{-2}\widehat{W}_c K_x,K_x\bigr\rangle, \quad x\in\cX
\end{equation}
with infinite-data limit
\[
L_c^{\lambda}(x) = \tfrac{1}{2}\scal{(W_c + \lambda I)^{-2}W_c K_x}{K_x},
\]
where $\lambda > 0$ is the regularization parameter.

Towards deriving an equivalent but computable expression for $\hat{L}_c$ defined in terms of kernels, we
recall the sampling operator $\Sx$ of~\cite{SmaleIntegral} and its adjoint. Let $\bx = \{x_i\}_{i=1}^{m}$ denote a generic sample of $m$ data points. To $\bx$ we can associate the operators
\begin{alignat*}{4}
\Sx &: \cH    &\to &\, \RR^{m}, &\quad h &\in\cH &\mapsto&\, \bigl(h(x_1),\ldots,h(x_{m})\bigr)\\
\Sxs &:\RR^{m} &\to &\, \cH, &\quad c &\in\RR^{m} &\mapsto&\, \textstyle\sum_{i=1}^{m}c_iK_{x_i}\,.
\end{alignat*}
If $\bx$ is the collection of $m=Nq$ controllability samples, one can check that
$\widehat{W}_c = \tfrac{1}{m}\Sxs\Sx$ and $K_c=\Sx\Sxs$. Consequently,
\begin{align*}
\hat{L}_c(x) &=\tfrac{1}{2}\scal{(\tfrac{1}{m}\Sxs\Sx + \lambda I)^{-2}\tfrac{1}{m}\Sxs\Sx K_{x}}{K_{x}}\\
&=\tfrac{1}{2m}\scal{\Sxs(\tfrac{1}{m}\Sx\Sxs + \lambda I)^{-2}\Sx K_{x}}{K_{x}}\\
&= \tfrac{1}{2m}{\bf k_c}(x)^{\tr}(\tfrac{1}{m}K_c + \lambda I)^{-2}{\bf k_c}(x),
\end{align*}
where ${\bf k_c}(x):=\Sx K_x = \bigl(K(x,x_{\mu})\bigr)_{\mu=1}^{Nq}$ is the $Nq$-dimensional column vector
containing the kernel products between $x$ and the controllability samples.

Similarly, letting $\bx$ now denote the collection of $m=Np$ observability samples, we can approximate the future output energy by
\begin{align}
\hat{L}_o(x) &= \tfrac{1}{2}\bigl\langle\widehat{W}_oK_x,K_x\bigr\rangle  \\\label{eqn:Lo_rkhs}
&= \tfrac{1}{2m}\bigl\langle\Sxs\Sx K_x, K_x\bigr\rangle \nonumber \\
 &= \tfrac{1}{2m}{\bf k_o}(x)^{\tr}{\bf k_o}(x)
 = \tfrac{1}{2m}\nor{{\bf k_o}(x)}_2^2\nonumber
\end{align}
where ${\bf k_o}(x):=\bigl(K(x,d_{\mu})\bigr)_{\mu=1}^{Np}$ is the $Np$-dimensional column vector
containing the kernel products between $x$ and the observability samples.
We collect the above results into the following definition:
\begin{definition}\label{def:rkhs_energies}  Given a nonlinear control system of the form~\eqref{sigma}, we define the kernel
controllability energy function and the kernel observability energy function as, respectively,
\begin{align}
\hat{L}_c(x) &= \tfrac{1}{2Nq}{\bf k_c}(x)^{\tr}(\tfrac{1}{Nq}K_c + \lambda I)^{-2}{\bf k_c}(x) \\ \label{eqn:lc_hat}
\hat{L}_o(x) &= \tfrac{1}{2Np}\nor{{\bf k_o}(x)}_2^2 \;.
\end{align}
\end{definition}
Note that the kernels used to define $\hat{L}_c$ and $\hat{L}_o$ need not be the same.

\subsection{Consistency}
We'll now turn to showing that the estimator $\hat{L}_c$ is consistent, but note that
{\em we do not address the approximation error} between the energy function estimates and the true
 but unknown underlying functions. Controlling the approximation error requires making specific assumptions
about the nonlinear system, and we leave this question open.

In the following we will make an important set of assumptions regarding the kernel $K$ and the
RKHS $\cH$ it induces.
\begin{assumption}\label{ass:rkhs}
The reproducing kernel $K$ defined on the compact statespace $\cX\subset\RR^n$ is locally Lipschitz, measurable and defines a completely regular RKHS. Furthermore the diagonal of $K$ is uniformly bounded,
\[\label{eqn:kappa}
\kappa^2 = \sup_{x\in\cX}K(x,x) <\infty.
\]
\end{assumption}
Separable RKHSes are induced by continuous kernels on separable spaces $\cX$.
Since $\cX\subset\RR^n$ is separable and locally Lipschitz functions are also continuous, $\cH$ will always be separable. {\em Completely regular} RKHSes are introduced in~\cite{RosascoDensity} and
the reader is referred to this reference for details. Briefly, complete regularity ensures recovery of level sets of {\em any} distribution, in the limit of infinite data. The Gaussian kernel does not define a completely regular RKHS, but the $L_1$ exponential and Laplacian kernels do~\cite{RosascoDensity}.

We introduce some additional notation. Let $W_{c,m}$ denote the empirical RKHS gramian formed from a sample of size $m$ observations, and let the corresponding control energy estimate in Definition~\ref{def:rkhs_energies} involving $W_{c,m}$ and regularization parameter $\lambda$ be denoted by $L_{c,m}^{\lambda}$.

The following preliminary lemma provides finite sample error bounds for Hilbert-Schmidt covariance matrices
on real, separable reproducing kernel Hilbert spaces.
\begin{lemma}[\cite{RosascoIntegral} Theorem 7; Props. 8, 9]\label{lem:cov_conc}
\mbox{}
\begin{itemize}\itemsep 0pt
\item[(i)] The operators $W_c, W_{c,m}$ are Hilbert-Schmidt.
\item[(ii)] Let $\delta\in(0,1]$. With probability at least $1-\delta$,
\[
\nor{W_c-W_{c,m}}_{HS} \leq \frac{2\sqrt{2}\kappa^2}{\sqrt{m}}\log^{1/2}\frac{2}{\delta}.
\]
\end{itemize}
\end{lemma}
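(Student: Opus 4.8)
The plan is to recognize both operators as (Bochner) averages of i.i.d.\ rank-one Hilbert--Schmidt operators and to reduce the entire statement to a single concentration inequality for bounded random variables taking values in the separable Hilbert space $HS(\cH)$ of Hilbert--Schmidt operators on $\cH$. Concretely, I would set $\xi_i := K_{x_i}\otimes K_{x_i}$, so that $W_{c,m}=\frac{1}{m}\sum_{i=1}^m\xi_i$ and, by~\eqref{eqn:covop_gramian}, $W_c=\bbE_{\mu_{\infty}}[\xi]$ with $\xi:=K_x\otimes K_x$. The bound $\kappa^2=\sup_{x}K(x,x)<\infty$ from Assumption~\ref{ass:rkhs} is the single quantitative input: for every $x$ the operator $\xi(x)$ is rank one with $\nor{\xi(x)}_{HS}=\nor{K_x}_{\cH}^2=K(x,x)\le\kappa^2$, and since $\cH$ is separable (a continuous kernel on the separable set $\cX\subset\RR^n$ induces a separable RKHS), $HS(\cH)$ is itself a separable Hilbert space, which is the setting the later steps require.

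For part~(i), $W_{c,m}$ is a finite average of HS operators and is therefore HS with $\nor{W_{c,m}}_{HS}\le\kappa^2$. For $W_c$ I would argue that $x\mapsto\xi(x)$ is Bochner integrable into $HS(\cH)$: measurability of $K$ together with separability of $\cH$ makes $x\mapsto K_x$ strongly measurable, and composing with the continuous bilinear tensor-product map yields a strongly measurable, uniformly bounded $HS(\cH)$-valued map on a probability space. Bochner integrability then follows, so $W_c=\int_{\cX}\xi\,d\mu_{\infty}\in HS(\cH)$ with $\nor{W_c}_{HS}\le\int\nor{\xi}_{HS}\,d\mu_{\infty}\le\kappa^2$; because evaluation against a fixed HS operator is a bounded linear functional, this Bochner integral agrees with the operator defined in~\eqref{eqn:covop_gramian}.

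For part~(ii), I would center the summands: let $\eta_i:=\xi_i-W_c$, which are i.i.d., mean zero in $HS(\cH)$, and bounded by $\nor{\eta_i}_{HS}\le\nor{\xi_i}_{HS}+\nor{W_c}_{HS}\le 2\kappa^2$. Since $W_{c,m}-W_c=\frac{1}{m}\sum_{i=1}^m\eta_i$, I would invoke a Hoeffding-type (Pinelis) inequality for sums of independent, bounded, zero-mean random variables valued in a separable Hilbert space, giving $\bbP\bigl(\nor{\frac{1}{m}\sum_{i=1}^m\eta_i}_{HS}\ge t\bigr)\le 2\exp\bigl(-mt^2/(8\kappa^4)\bigr)$. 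Setting the right-hand side equal to $\delta$ and solving for $t$ yields exactly $t=2\sqrt{2}\,\kappa^2 m^{-1/2}\log^{1/2}(2/\delta)$, the claimed bound. A self-contained alternative avoids the vector inequality: apply McDiarmid's bounded-difference inequality to $g(x_1,\dots,x_m)=\nor{W_{c,m}-W_c}_{HS}$, whose per-coordinate differences are at most $2\kappa^2/m$, while $\bbE g\le(\bbE g^2)^{1/2}=(m^{-1}\bbE\nor{\eta}_{HS}^2)^{1/2}\le\kappa^2 m^{-1/2}$ by independence; this produces the same rate with comparable constants.

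The hard part is not the scalar algebra but the two infinite-dimensional points. First, making $W_c=\bbE\xi$ a bona fide Hilbert--Schmidt operator requires the measurability and separability hypotheses of Assumption~\ref{ass:rkhs}, without which the Bochner integral, and hence the statement itself, is not even well posed. Second, obtaining a \emph{dimension-free} deviation bound is essential: ordinary scalar Hoeffding does not apply coordinatewise because $\cH$ may be infinite dimensional (e.g.\ the Gaussian kernel), so one genuinely needs the Hilbert-space-valued concentration inequality, whose hypotheses are satisfied only because $HS(\cH)$ is separable and the summands are uniformly bounded by $\kappa^2$. This is precisely the content abstracted in~\cite{RosascoIntegral}.
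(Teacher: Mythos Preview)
The paper does not supply its own proof of this lemma; it is quoted verbatim from~\cite{RosascoIntegral} (Theorem~7, Propositions~8 and~9), so there is no in-paper argument to compare against. Your reconstruction is correct and is precisely the strategy used in that reference: view $W_{c,m}$ and $W_c$ as the empirical and true means of the i.i.d.\ $HS(\cH)$-valued random variables $\xi_i=K_{x_i}\otimes K_{x_i}$, check Bochner integrability via separability and the uniform bound $\nor{\xi}_{HS}\le\kappa^2$, and then apply a Pinelis--Hoeffding concentration inequality for bounded zero-mean random variables in a separable Hilbert space to obtain the stated deviation bound with the exact constant $2\sqrt{2}\kappa^2$.
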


The following theorem establishes consistency of the estimator $L_{c,m}^{\lambda}$, the proof of which follows the method of integral operators developed by~\cite{SmaleIntegral,AndreaFastRates} and subsequently adopted in the context of density estimation by~(\cite{RosascoDensity}, Theorem 1).
\begin{theorem}
\mbox{}
\begin{itemize}\itemsep 0pt
\item[(i)] Fix $\lambda > 0$. For each $x\in\cX$, with probability at least $1-\delta$,
\[
\bigl|L_{c,m}^{\lambda}(x) - L_c^{\lambda}(x)\bigr| \leq \frac{2\sqrt{2}\kappa^4(\lambda^2 + \kappa^4)}{\lambda^4\sqrt{m}}\log^{1/2}\frac{2}{\delta} .
\]
\item[(ii)] If $(K,\cX,\mu_{\infty})$ is such that
    \[\label{eqn:bounded_pinv}
    \sup_{x\in\cX}\|W_c^{\dag}(W_c^{\dag}W_c)K_x\|_{\cH}<\infty,
    \]
then for all $x\in\cX$,
$$
\displaystyle\lim_{\lambda\to 0}|L_c^{\lambda}(x) -  L_c(x)|  = 0.
$$
\item[(iii)] If the condition~\eqref{eqn:bounded_pinv} holds and the sequence $\{\lambda_m\}_m$ satisfies $\displaystyle\lim_{m\to\infty}\lambda_m=0$ with
$\displaystyle\lim_{m\to\infty}\tfrac{\log^{1/2}m}{\lambda_m\sqrt{m}} = 0$, then
$$
\lim_{m\to\infty}\bigl|L_{c,m}^{\lambda}(x) - L_c(x)\bigr| = 0,\quad\text{almost surely.}
$$
\end{itemize}
\end{theorem}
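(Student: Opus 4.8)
The plan is to derive all three parts from the single decomposition
\[
|L_{c,m}^{\lambda}(x)-L_c(x)| \;\le\; \underbrace{|L_{c,m}^{\lambda}(x)-L_c^{\lambda}(x)|}_{\text{sampling error}} \;+\; \underbrace{|L_c^{\lambda}(x)-L_c(x)|}_{\text{regularization error}},
\]
handling the sampling error by operator concentration (part (i)), the regularization error by a deterministic spectral argument (part (ii)), and gluing the two together with Borel--Cantelli (part (iii)). For part (i), I would set $g_\lambda(A):=(A+\lambda I)^{-2}A$, so that the sampling error equals $\tfrac12|\scal{[g_\lambda(W_{c,m})-g_\lambda(W_c)]K_x}{K_x}|$; by Cauchy--Schwarz and $\nor{K_x}^2=K(x,x)\le\kappa^2$ this is at most $\tfrac{\kappa^2}{2}\nor{g_\lambda(W_{c,m})-g_\lambda(W_c)}_{op}$. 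Because $W_{c,m}$ and $W_c$ need not commute, I would avoid scalar functional calculus and instead write $g_\lambda(A)=(A+\lambda I)^{-1}-\lambda(A+\lambda I)^{-2}$, reducing everything to differences of resolvents and squared resolvents. The resolvent identity $(A+\lambda I)^{-1}-(B+\lambda I)^{-1}=(A+\lambda I)^{-1}(B-A)(B+\lambda I)^{-1}$ together with $\nor{(A+\lambda I)^{-1}}_{op}\le1/\lambda$ and $\nor{W_c}_{op},\nor{W_{c,m}}_{op}\le\kappa^2$ (since $\nor{W_c}_{op}\le\int_{\cX}\nor{K_x}^2\,d\mu_\infty(x)\le\kappa^2$) yields a Lipschitz bound $\nor{g_\lambda(W_{c,m})-g_\lambda(W_c)}_{op}\le C(\lambda,\kappa)\nor{W_{c,m}-W_c}_{HS}$. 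Substituting Lemma~\ref{lem:cov_conc}(ii) then gives the stated estimate; arranging the telescoping so the constant collapses to $\kappa^4(\lambda^2+\kappa^4)/\lambda^4$ is the one piece of genuine bookkeeping.

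Part (ii) is deterministic, and I would prove it by diagonalizing the compact positive operator $W_c=\sum_i\sigma_i\,e_i\otimes e_i$ with $\sigma_i>0$ and expanding $K_x=\sum_i a_ie_i$ modulo $\ker W_c$, where $a_i=\scal{K_x}{e_i}$. Then $L_c^{\lambda}(x)-L_c(x)$ is proportional to $\sum_i\bigl(\tfrac{\sigma_i}{(\sigma_i+\lambda)^2}-\tfrac1{\sigma_i}\bigr)a_i^2$, and each summand tends to $0$ as $\lambda\to0$. The difficulty is that the limiting weight $1/\sigma_i$ is unbounded---this is exactly the discontinuity of the pseudoinverse---so termwise convergence alone is insufficient. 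Hypothesis~\eqref{eqn:bounded_pinv} supplies the missing ingredient: it asserts $\sum_i a_i^2/\sigma_i^2=\nor{W_c^{\dag}(W_c^{\dag}W_c)K_x}_{\cH}^2<\infty$, while the elementary inequality $\tfrac{2\lambda\sigma_i+\lambda^2}{(\sigma_i+\lambda)^2}\le1$ gives $\bigl|\tfrac{\sigma_i}{(\sigma_i+\lambda)^2}-\tfrac1{\sigma_i}\bigr|\le\kappa^2/\sigma_i^2$ uniformly in $\lambda$. This produces the summable dominating sequence $\kappa^2a_i^2/\sigma_i^2$, and dominated convergence for series closes the part.

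For part (iii) I would combine (i) and (ii) via Borel--Cantelli. Fixing a summable confidence schedule, say $\delta_m=m^{-2}$ so that $\log^{1/2}(2/\delta_m)\asymp\log^{1/2}m$, applying part (i) at each $m$ with $\lambda=\lambda_m$ and $\delta=\delta_m$ bounds the sampling error, on an event of probability at least $1-\delta_m$, by the part-(i) constant times $\log^{1/2}m/\sqrt m$. The rate hypothesis $\log^{1/2}m/(\lambda_m\sqrt m)\to0$, together with $\lambda_m\to0$, is tailored to drive this random bound to $0$; since $\sum_m\delta_m<\infty$, Borel--Cantelli makes the sampling error vanish almost surely. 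The regularization error vanishes for every $x$ by part (ii) because $\lambda_m\to0$, so the triangle inequality finishes the argument.

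I expect the conceptual crux to be part (ii): recovering the pseudoinverse-based energy $L_c$ from its regularized surrogates $L_c^{\lambda}$ is delicate precisely because $W_c^{\dag}$ is unbounded and the naive $\lambda\to0$ limit need not exist, so the whole purpose of hypothesis~\eqref{eqn:bounded_pinv} is to manufacture a summable dominating sequence. The resolvent perturbation bound in part (i) is routine but is the most calculation-heavy step, and in part (iii) the only subtle point is matching the polynomial growth in $1/\lambda_m$ of the part-(i) constant against the prescribed decay rate of $\lambda_m$.
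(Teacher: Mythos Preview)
Your proposal is correct and follows essentially the same three-step strategy as the paper's own proof: an operator-perturbation bound combined with Lemma~\ref{lem:cov_conc} for (i), spectral decomposition of the compact self-adjoint operator $W_c$ together with hypothesis~\eqref{eqn:bounded_pinv} to justify passage to the limit for (ii), and Borel--Cantelli for (iii). The only cosmetic difference is the algebra in (i): the paper factors the difference directly via the identity $(B+\lambda I)^{-2}B-(A+\lambda I)^{-2}A=(A+\lambda I)^{-2}\bigl[\lambda^2(B-A)+A(A-B)B\bigr](B+\lambda I)^{-2}$ (with $A=W_c$, $B=W_{c,m}$) rather than your decomposition $g_\lambda(A)=(A+\lambda I)^{-1}-\lambda(A+\lambda I)^{-2}$ followed by resolvent identities, but both routes yield a Lipschitz estimate in $\nor{W_{c,m}-W_c}_{HS}$ to which the lemma applies.
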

\begin{proof}
For (i), the sample error, we have
\begin{align*}
2\bigl|L_{c,m}^{\lambda}(x) - L_c^{\lambda}(x)\bigr|
& \leq \nor{(W_{c,m} + \lambda I)^{-2}W_{c,m} - (W_c+ \lambda I)^{-2}W_c }\nor{K_{x}}^2_{\cH} \\
& \leq \bigl\|(W_{c} + \lambda I)^{-2}[\lambda^2(W_{c,m} - W_c) + W_c(W_c - W_{c,m})W_{c,m}](W_{c,m} + \lambda I)^{-2} \bigr\|\kappa^2\\
& \leq \frac{\kappa^2(\lambda^2 + \kappa^4)}{\lambda^4}\nor{W_{c,m} - W_c}_{HS}
\end{align*}
where $\nor{\cdot}$ refers to the operator norm. The second inequality follows from spectral calculus and~\eqref{eqn:kappa}. The third line follows making use of the estimates $\nor{(W_{c,m} + \lambda I)^{-2}}\leq \lambda^{-2}, \nor{(W_c + \lambda I)^{-2}}\leq \lambda^{-2}, \|W_c\|_{HS}\leq \kappa^2, \|W_{c,m}\|_{HS}\leq\kappa^2$ (and the fact that $\lambda > 0$ so that the relevant quantities are invertible).
Part (i) then follows applying Lemma~\ref{lem:cov_conc} to the quantity
$\nor{W_{c,m} - W_c}_{HS}$.
For (ii), the approximation error, note that the compact self-adjoint operator $W_c$ can be expanded onto an orthonormal basis $\{\sigma_i,\phi_i\}$. We then have
\begin{align*}
 2\bigl|L_{c}^{\lambda}(x) - L_c(x)\bigr|
& = \bigl|\bigl\langle[(W_{c} + \lambda I)^{-2}W_{c} - W_c^{\dag}(W_c^{\dag}W_c)]K_x,K_x\bigr\rangle\bigr| \\
& = \left| \sum_i\frac{\sigma_i}{(\sigma_i + \lambda)^2}|\langle \phi_i,K_x\rangle|^2 -
\sum_{i:\sigma_i>0}\frac{1}{\sigma_i}|\langle\phi_i,K_x\rangle|^2\right| \\
& \leq \lambda\sum_{i:\sigma_i>0}\frac{2\sigma_i + \lambda}{(\sigma_i + \lambda)^2\sigma_i}|\langle \phi_i,K_x\rangle|^2 .
\end{align*}
The last quantity above can be seen to converge to 0 as $\lambda\to 0$ since the sum converges for all $x$ under the condition~\eqref{eqn:bounded_pinv}.
Lastly for part (iii), we see that if $m\to\infty$ and $\lambda^2\to 0$ slower than $\sqrt{m}$ then the sample error (i) goes to 0 while (ii) also holds. For almost sure convergence in part (i), we additionally require that for any $\varepsilon\in(0,\infty)$,
$$
\sum_m\bbP\bigl(|L_{c,m}^{\lambda}(x) - L_c^{\lambda}(x)| > \varepsilon\bigr)\leq
\sum_m e^{-\cO(m\lambda^4_m\varepsilon^2)} < \infty.$$
The choice $\lambda_m = \log^{-1/2}m$ satisfies this requirement, as can be seen from the fact that for large enough $M<\infty$, $\sum_{m>M} e^{-m/\log^2 m} \leq \sum_{m>M} e^{-\sqrt{m}} < \infty$.
\end{proof}
We note that the condition~\eqref{eqn:bounded_pinv} required in part (ii) of the theorem has also been discussed in the context of support estimation in forthcoming work from the authors of~\cite{RosascoDensity}.

\subsection{Comments on the Approximation Errors for the Observability Energy}

In this section,  we review some existing results for the approximation errors for Lyapunov functions using kernel methods.  In \cite{giesl_book, GieWen07}, the authors study the problem of approximate solutions using Radial Basis Functions (RBFs) of a general linear PDE
\[Lu=v \; \mbox{on}\; \Omega, \]
where $\Omega$ is a domain in $\RR^n$ and $L$ is a linear differential operator of the form
\[Lu(x)=\sum_{|\alpha| \le m}c_{\alpha}(x)D^{\alpha}u(x), \]
where $c_{\alpha} \in C^{\sigma}(\bar{\Omega},\RR)$, i.e. the derivatives of order $\beta$ with $|\beta| \le \sigma$ exist and are continuous on $\bar{\Omega}$. They particularly considered the case of orbital derivatives of a function $u$ with respect to the ODE $\dot{x}=f(x)$, i.e. when
\[Lu(x) :=\langle \nabla u(x), f(x) \rangle = \sum_{j=1}^n f_j(x) \partial_ju(x) \] 

\begin{theorem} \label{thm_v1} Consider $\dot{x}=f(x)$ with $f \in C^{\sigma}(\RR^n,\RR^n) $ and let $x_0$ be an equilibrium such that all eigenvalues of $Df(x_0)$ have a negative real part.
Let $p(x) \in C^{\sigma}(\RR^n,\RR)$ satisfy the following conditions: a.) $p(x)>0$ for $x \ne x_0$, b.) $p(x)=O(||x-x_0||_2^{\eta}) $ with $\eta >0$ for $x \rightarrow x_0$, c.) For all $\epsilon >0$, $p$ has a lower positive bound on $\RR^n \setminus B(x_0,\epsilon)$ where $B(x_0,\epsilon)$ is a the ball centered at $x_0$ of radius $\epsilon$. Then there exists a  Lyapunov function $V_1 \in C^{\sigma}(A(x_0),\RR)$ such that $V_1(x_0)=0$ and 
\[ \label{pde_v1}LV_1(x)=f_1(x) := -p(x), \; \mbox{\; for all \;} x \in A(x_0),\]
where $A(x_0)$ is the basin of attraction of $x_0$.
\end{theorem}

This theorem shows that by solving the equation (\ref{pde_v1}) one can find an approximation $V_1$ of the Lyapunov function $V$ for the ODE $\dot{x}=f(x)$ that satisfies $LV(x) < 0$. Moreover, the authors of  \cite{giesl_book, GieWen07} provided with error estimates when the solution of (\ref{pde_v1}) is obtained using a special kind of kernels called Wendland functions. The authors adopted a more direct approach in solving (\ref{pde_v1}) that we plan on using to find an alternative approximation of the controllability energy $L_c$. The advantage of using Wendland functions as kernels is that the resulting RKHS is also a Sobolev space thus allowing to use Sobolev inequalities to get estimates on the approximation of the solution of the Lyapunov PDE, cf. \cite{wendland} for more details about Wendland functions and the properties of the RKHSes associated to them. 

Briefly, let $\Phi(x) =\psi_k(||x||)$ be a radial function where $\psi_k$ is a Wendland function. Consider the grid points $X_N=\{x_1,\cdots,x_N \} \subset \RR^n$. Consider the following ansatz 
\[\label{betas}V_1(x)=\sum_{k=1}^n \beta_k (\delta_{x_k}\circ L)^y\Phi(x-y),\]
where $(\delta_{x_k}\circ L)^y$ denotes differentiation with respect to $y$ then evaluation at $y=x_k$.
By considering the interpolation conditions \[LV_1(x_j)=LV(x_j)=f_1(x_j),\]
and by plugin in the ansatz
\[\sum_{i=1}^N \beta_k  \underbrace{(\delta_{x_j}\circ L)^x (\delta_{x_k}\circ L)^y\Phi(x-y)}_{=a_{jk}}=LV(x_j)=f_1(x_j)=:\gamma_j,\] 
one gets a system of linear algebraic equations for the $\beta$ in (\ref{betas}):
\[A \beta = \gamma, \]
 where the matrix $A$ is symmetric and positive definite. For an approximation $V_1$ of the Lyapunov function $V$, the authors of \cite{giesl_book, GieWen07} proved
 \begin{theorem} \cite{giesl_book, GieWen07} Let $\psi_k$, $k \in \NN$, be a Wendland function and let $\Phi(x)=\psi_k(||x||) \in C^{2k}(\RR^n,\RR)$ be a radial basis function.  Let $f \in C^{\sigma}(\RR^n,\RR)$ where $\sigma \ge \frac{n+1}{2}+k$. Then, for each compact set $K_0 \subset A(x_0)$ there is $C^{\ast}$ such that
 \[\label{errors} |V^{\prime}(x)-V_1^{\prime}(x) | \le C^{\ast} h^{\theta} \; \mbox{for all}\; x \in K_0, \]
 where $h:=\max_{y \in K_0}\min_{x \in X_n}||x-y||$ is the fill distance and $\lambda=1/2$ for $k=1$ and $\lambda=1$ for $k \ge 2$ (or $\lambda=k-1/2$ as proved in \cite{GieWen07}).
 
 \end{theorem} 

Given the above results, we could immediately derive error estimates for the observability energy $L_o$ that satisfies (\ref{Lo_hjb}). If $h(x)$ is such that $p(x)=\frac{1}{2}h^T(x)h(x)$  satisfies the conditions in Theorem \ref{thm_v1}, then we immediately get an expression of $\hat{L}_o$ given by (\ref{betas}) and error estimates given in (\ref{errors}).  We leave for future work the extension of such approach when $\frac{1}{2}h^T(x)h(x)$ does not satisfy the conditions   in Theorem \ref{thm_v1} but we expect similar error estimates by performing a Taylor series expansion of $h$ and expressing $\frac{1}{2}h^T(x)h(x)=p(x)+e(||x-x_0||_2^{\eta+1})$ where $e$ is the error due to the Taylor expansion.

\section{Estimation of Invariant Measures for Ergodic Nonlinear SDEs}\label{sec:nonlinear-sdes}
In this Section we consider {\em ergodic} nonlinear SDEs of the form~\eqref{sde_nonlin}, where the invariant (or ``stationary'') measure is a key quantity providing a great deal of insight. 
Solving a Fokker-Planck equation
 of the form~\eqref{ito} is one way to determine the probability distribution describing the solution to an
SDE. However, for nonlinear systems finding an explicit solution to the Fokker-Planck equation --or even its
steady-state solution-- is a challenging problem. The study of existence of steady-state solutions can be traced back to the 1960s~\cite{fuller,zakai}, however explicit formulas for steady-state solutions of the Fokker-Planck equation exist in only a few special cases (see~\cite{butchart,da_prato1992,fuller, guinez,liberzon,risken}
for example). Such systems are often conservative or second order vector-fields. Hartmann~\cite{hartmann:08} among others has studied balanced truncation in the context of linear SDEs, where empirical estimation of gramians plays a key role.

We propose here a data-based non-parametric estimate of the solution to the steady-state Fokker-Planck equation~\eqref{steady} for a nonlinear SDE, by combining the relation~\eqref{rho_approx} with the control energy estimate~\eqref{eqn:lc_hat}. Following the general theme of this paper, we make use of the theory from the linear Gaussian setting described in Section~\ref{sec:linear-sdes}, but in a suitable reproducing kernel
Hilbert space. Other estimators have of course been proposed in the literature for approximating invariant measures and for density estimation from data more generally (see e.g.~\cite{biau,froyland,froyland1,kilminster,RosascoDensity}), however to our knowledge we are not aware of any estimation techniques which combine RKHS theory and nonlinear dynamical control systems. An advantage of our approach over other non-parametric methods is that an invariant density is approximated by way of a regularized fitting process, giving the user an additional degree of freedom in the regularization parameter.

Our setting adopts the perspective that the nonlinear stochastic system~\eqref{sde_nonlin} behaves
approximately linearly when mapped via $\Phi$ into the RKHS $\cH$, and as such
may be modeled by an infinite dimensional linear system in $\cH$. Although this system is {\em unknown},
we know that it is linear and that we can estimate its gramians and control energies from observed data. Furthermore, we know that the invariant measure of the system in $\cH$ is zero-mean Gaussian with covariance given by the controllability gramian. Thus the original nonlinear system's invariant measure on $\cX$ should be reasonably approximated by the pullback along $\Phi$ of the Gaussian invariant measure associated with the linear infinite dimensional SDE in $\cH$.

We summarize the setting in the following {\em modeling Assumption}:
\begin{assumption}\label{ass:ou_proc}
Let $\cH$ be a real, possibly infinite dimensional RKHS satisfying Assumption~\ref{ass:rkhs}.
\begin{itemize}
\item[(i)] Given a suitable choice of kernel $K$, if the $\RR^d$-valued stochastic process $x(t)$ is a solution to the (ergodic) stochastically excited nonlinear system~\eqref{sde_nonlin}, the $\cH$-valued stochastic process \mbox{$(\Phi\circ x)(t)=:X(t)$} can be reasonably modeled as an Ornstein-Uhlenbeck process
\[\label{eqn:infdim-sde}
dX(t) = AX(t)dt + \sqrt{C}dW(t), \quad X(0)=0\in\cH
\]
where $A$ is linear, negative and is the infinitesimal generator of a strongly continuous semigroup $e^{tA}$, $C$ is linear, continuous, positive and self-adjoint, and $W(t)$ is the cylindrical Wiener process.
\item[(ii)] The measure $P_{\infty}$ is the invariant measure of the OU process~\eqref{eqn:infdim-sde} and $P_{\infty}$ is the pushforward along $\Phi$ of the unknown invariant measure $\mu_{\infty}$ on the statespace $\cX$ we would like to approximate.
\item[(iii)] The measure $\mu_{\infty}$ is absolutely continuous with respect to Lebesgue measure, and so admits a density.
\end{itemize}
\end{assumption}
We will proceed in deriving an estimate of the invariant density under these assumptions, but note that
there are interesting systems for which  the assumptions may not always hold in practice. For example,  uncontrollable systems may not have a unique invariant measure. In these cases one must interpret the results discussed here as heuristic in nature.

It is known that a mild solution $X(t)$ to the SDE~\eqref{eqn:infdim-sde} exists
and is unique (\cite{da_prato1992}, Thm. 5.4. pg. 121). Furthermore, the controllability gramian
associated to~\eqref{eqn:infdim-sde}
\[\label{eqn:infdim-gramian}
W_ch = \int_0^{\infty}e^{tA}Ce^{tA^*}hdt,\quad h\in\cH
\]
is trace class (\cite{da_prato2006}, Lemma 8.19), and the unique measure $P_{\infty}$ invariant with respect to the Markov semigroup associated to the OU process has characteristic function (\cite{da_prato2006}, Theorem 8.20)
\[\label{eqn:inv-meas}
\widetilde{P}_{\infty}(h) = \exp\Bigl(-\tfrac{1}{2}\scal{W_ch}{h}\Bigr),\quad h\in\cH \;.
\]
We will use the notation $\widetilde{P}$ to refer to the Fourier transform of the measure $P$.
The law of the solution $X(t)$ to problem~\eqref{eqn:infdim-sde} given initial condition $X(0)=0$ is
Gaussian with zero mean and covariance operator $Q_t=\int_{0}^t e^{sA}Ce^{sA^*}ds$. Thus
\begin{align*}
W_c &= \lim_{t\to\infty}\bbE[X(t)\otimes X(t)]\\
 & = \int_{\cH}\scal{\cdot}{h}{h}dP_{\infty}(h)\\
&= \int_{\cX}\scal{\cdot}{K_x}K_xd\mu_{\infty}(x)
\end{align*}
where the last integral follows pulling $P_{\infty}$ back to $\cX$ via $\Phi$, establishing
the equivalence between~\eqref{eqn:infdim-gramian} and ~\eqref{eqn:covop_gramian}.

Given that the measure $P_{\infty}$ has Fourier transform~\eqref{eqn:inv-meas} and by Assumption~\ref{ass:ou_proc} is interpreted as the pushforward of $\mu_{\infty}$ (that is, for Borel sets $B\in\mathcal{B}(\cH)$, $P_{\infty}(B)=(\Phi_*\mu_{\infty})(B)=\mu_{\infty}(\Phi^{-1}(B))$ formally), we
have that $\widetilde{\mu}_{\infty}(x) = \exp\bigl(-\tfrac{1}{2}\scal{W_cK_x}{K_x}\bigr)$.

The invariant measure $\mu_{\infty}$ is defined on a finite dimensional space, so together with
part (iii) of Assumption~\ref{ass:ou_proc}, we may consider the corresponding (Radon-Nikodym) density
$$\rho_{\infty}(x) \propto \exp\bigl(-\tfrac{1}{2}\scal{W_c^{\dag}(W_c^{\dag}W_c)K_x}{K_x}\bigr)$$
whenever the condition~\eqref{eqn:bounded_pinv} holds. If~\eqref{eqn:bounded_pinv} does not hold or if we are considering a finite data sample, then we regularize to arrive at
\[
\rho_{\infty}(x) \propto \exp\bigl(-\tfrac{1}{2}\scal{(W_c + \lambda I)^{-1}K_x}{K_x}\bigr)
\]
as discussed in Section~\ref{sec:linear-sdes} (see Eq.~\ref{rho_approx}) and Section~\ref{sec:energy_fns}. This density may be
estimated from data $\{x_i\}_{i=1}^N$ since the controllability energy may be estimated from data: at a new point $x$, we have
\[
 \hat{\rho}_{\infty}(x) = Z^{-1}\exp\bigl(-\hat{L}_c(x)\bigr)
\]
 where $\hat{L}_c$ is the empirical approximation computed according to Definition~\ref{def:rkhs_energies},
and the constant $Z$ may be either computed analytically in some cases or simply estimated from the data
sample to enforce summation to unity.
We may also estimate, for example, level sets of $\rho_{\infty}$ (such as the support) by considering level
sets of the regularized control energy function estimator,  $\{x\in\cX~|~ L_{c,m}(x) \leq \tau\}$.

%

\subsection{Numerical Examples}
Consider the SDE $dX=-5X^5+10X^3+\sqrt{2} dW$. This is a gradient system $dX=-\nabla \Phi(x)+b dW $ and the exact stationary measure is given by $ \rho_{\infty}(x)=Ne^{-2\phi(x)/b^2}$. The figure below shows the the comparison between the exact steady-state measure, our estimate and the empirical estimate (obtained directly by counting the data points).

\begin{figure}
 \includegraphics[height=15cm]{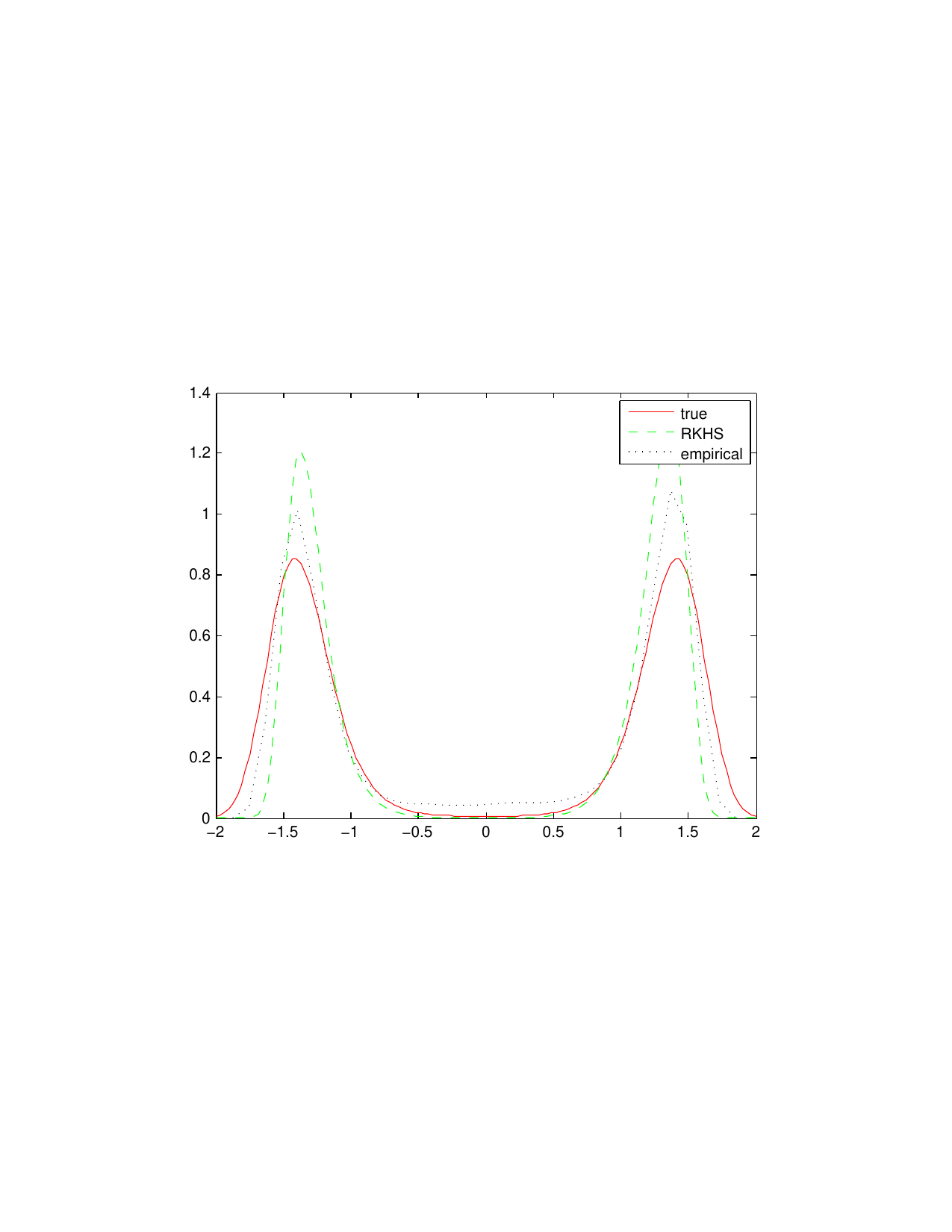}
\end{figure}

\section{Conclusion}
To summarize our contributions, we have introduced estimators for the controllability/observability energies  of nonlinear control systems. We showed that the controllability energy
estimator may be used to approximate the stationary solution of the Fokker-Planck equation governing nonlinear SDEs.

The estimators we  derived were based on applying linear methods for control and random
dynamical systems to nonlinear control systems and SDEs, once mapped into an
infinite-dimensional RKHS acting as a ``linearizing space''. These results collectively argue that working in reproducing kernel Hilbert spaces  offers tools for a data-based theory for nonlinear dynamical systems.

We leave for future work the formulation of data-based estimators for Lyapunov exponents and the controllability/observability operators $\Psi_c,\Psi_o$ associated to nonlinear systems.

\section*{Acknowledgements}
We  thank Lorenzo Rosasco and Jonathan Mattingly for helpful discussions.  BH thanks the European Commission and the
Scientific and the Technological Research Council of Turkey (Tubitak)
for financial support received through a Marie Curie Fellowship, and JB gratefully acknowledges support under NSF contracts NSF-IIS-08-03293 and NSF-CCF-08-08847 to M. Maggioni.

\end{document}